\newcommand{\vp}{\varphi}
\newcommand{\ve}{\varepsilon}
\newcommand{\ddbar}{\sqrt{-1} \partial \overline{\partial}}
\begin{document}
\newcounter{remark}
\newcounter{theor}
\setcounter{remark}{0}
\setcounter{theor}{1}
\newtheorem{claim}{Claim}
\newtheorem{theorem}{Theorem}[section]
\newtheorem{lemma}[theorem]{Lemma}
\newtheorem{corollary}[theorem]{Corollary}
\newtheorem{proposition}[theorem]{Proposition}
\newtheorem{question}{question}[section]
\newtheorem{defn}{Definition}[theor]
\numberwithin{equation}{section}

\title[Optimal regularity of plurisubharmonic envelopes]{Optimal regularity of plurisubharmonic envelopes on compact Hermitian manifolds}
\author{Jianchun Chu}
\address{School of Mathematical Sciences, Peking University, Yiheyuan Road 5, Beijing, P.R.China, 100871}
\email{chujianchun@pku.edu.cn}

\author[Bin Zhou]{Bin Zhou*}
\address{School of Mathematical Sciences, Peking University, Yiheyuan Road 5, Beijing, P.R.China, 100871}
\email{bzhou@pku.edu.cn}

\subjclass[2010]{Primary: 32W20; Secondary:  32U05}

\thanks {*Partially supported by NSFC 11571018 and 11331001}

\begin{abstract}
In this paper, we prove the $C^{1, 1}$-regularity of the plurisubharmonic envelope of a $C^{1,1}$ function on a compact Hermitian manifold. We also present examples to show this regularity is sharp.
\end{abstract}
\maketitle

\section{Introduction}
The subharmonic envelope is an important tool in the classical potential theory for Laplacian equation.
This notion can be extended to the potential theory of nonlinear elliptic equations, and  the issue of regularity of the envelope also arises naturally.
For convex envelopes, the optimal $C^{1,1}$-regularity has been
confirmed recently in \cite{DF15}. For complex Monge-Amp\`ere equations on a domain in $\mathbb C^n$,
the Perron-Bremermann plurisubharmonic upper envelope has been studied in \cite{BT}.
It is also interesting to establish the regularity for plurisubharmonic envelopes on  complex manifolds.
On a K\"ahler manifold, the plurisubharmonic envelopes have been studied for cohomology classes of great extent, including big classes \cite{BD, BEGZ10}.

Let $(M,\omega)$ be a compact Hemitian manifold of complex dimension $n$
and $PSH(M,\omega)$ be the set of $\omega$-plurisubharmonic functions \cite{S}.
For any function $f$ on $M$,  following \cite{BD, BEGZ10}, we define its {\it plurisubharmonic envelope (or extremal function)} by
\begin{equation}\label{Definition of envelope}
\vp_{f}(x)=\sup\{\vp(x)\ |\ \vp\in PSH(M,\omega) \text{~and~} \vp\leq f\},\ x\in M.
\end{equation}
Then $\vp_{f}\in PSH(M,\omega)$. Moreover,
it is shown in \cite{BD} that, when $\omega$ is K\"ahler and $f\in C^{\infty}(M)$, $\vp_f\in C^{1,\alpha}(M)$ for any $\alpha\in(0,1)$.
It is expected that the optimal regularity for this envelope is $C^{1,1}$, which has been realized when
$[\omega]$ is an integral class \cite{Be09, RN}. We prove the sharp regularity for general Hermitian manifold in this paper.

The idea of the proof is to consider the envelope as the solution to an obstacle problem for the  complex Monge-Amp\`ere equation \cite{Be}.
The similar treatment for the real Monge-Amp\`ere equations and other equations can be found in \cite{Lee, Ob,  DWZ}. Then the regularity relies
on the a priori estimates of the solutions to the following complex Monge-Amp\`ere equations
\begin{equation}\label{CMA0}
(\omega+\ddbar\vp)^{n}=e^{\frac{1}{\ve}(\vp-f)}\omega^{n}
\end{equation}
for small $\ve>0$.
It is well-known that in K\"ahler case,  the solution to the above equation
 has been established by \cite {Aub76,Yau78}. The solvability has been extended to the Hermitian case by \cite{C, GuLi}.
Let $\vp_\ve$ be the solution to (\ref{CMA0}).
Then we have

\begin{theorem}\label{C 1,1 regularity of envelope}
Let $(M,\omega)$ be a compact Hermitian manifold and $f\in C^{1,1}(M)$. Then we have $\vp_\ve$ converges to $\vp_{f}$ and there is a constant $C$ independent of $\ve$ such that $\|\vp_\ve\|_{C^{2}(M)}\leq C$. In particular, $\vp_f\in C^{1,1}(M)$.
\end{theorem}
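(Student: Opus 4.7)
My plan is to derive uniform $C^2(M)$ bounds on the family $\{\vp_\ve\}_{\ve\in(0,1]}$ of smooth solutions to (\ref{CMA0}) and then extract a subsequential limit as $\ve\to 0$. Once such bounds are in hand, Arzel\`a--Ascoli yields a $C^{1,\alpha}$ limit $\vp_*\in C^{1,1}(M)\cap PSH(M,\omega)$ with $\vp_*\le f$, so $\vp_*\le\vp_f$ by (\ref{Definition of envelope}). Conversely, for any (suitably regularized) competitor $\psi\in PSH(M,\omega)$ with $\psi\le f$, the maximum principle applied to $\psi-\vp_\ve$ together with the determinant comparison supplied by (\ref{CMA0}) gives $\vp_\ve\ge\psi-o(1)$ as $\ve\to 0$, so $\vp_*\ge\vp_f$, and hence $\vp_*=\vp_f\in C^{1,1}(M)$.

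For the uniform $C^0$ estimate I apply the maximum principle on $\vp_\ve-f$: at its maximum one has $0\le\omega+\ddbar\vp_\ve\le\omega+\ddbar f$, so taking determinants gives $e^{(\vp_\ve-f)/\ve}\le C(\|f\|_{C^{1,1}})$ and hence $\vp_\ve\le f+C\ve$; at the minimum of $\vp_\ve$ one has $\ddbar\vp_\ve\ge 0$, so $(\omega+\ddbar\vp_\ve)^n\ge\omega^n$ and hence $\vp_\ve\ge\inf f$. For the uniform $C^1$ estimate I would apply the Blocki/Guan--Li maximum-principle argument to $G=\log|\partial\vp_\ve|_\omega^2-A\vp_\ve$ under the linearized operator $\Delta_{\omega'_\ve}$, where $\omega'_\ve:=\omega+\ddbar\vp_\ve$. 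The differentiated equation (\ref{CMA0}) feeds $\partial_k F=\ve^{-1}(\vp_{\ve,k}-f_k)$ into the computation of $\Delta_{\omega'_\ve}|\partial\vp_\ve|^2_\omega$, contributing a dominant positive term of order $\ve^{-1}|\partial\vp_\ve|^2_\omega$; this favourable sign forces $|\partial\vp_\ve|_\omega$ to be bounded at the maximum of $G$ (for $A$ large enough to absorb the Hermitian torsion cross terms), and hence everywhere via the structure of $G$.

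The heart of the argument is the uniform complex Hessian bound. I apply $\Delta_{\omega'_\ve}$ to $H=\log\mathrm{tr}_\omega\omega'_\ve-A\vp_\ve$ and use the Hermitian Aubin--Yau estimate (as developed by Cherrier, Tosatti--Weinkove, and others) to obtain a lower bound of the form
\begin{equation*}
\Delta_{\omega'_\ve}\log\mathrm{tr}_\omega\omega'_\ve\ \geq\ \frac{\Delta_\omega F}{\mathrm{tr}_\omega\omega'_\ve}-C\,\mathrm{tr}_{\omega'_\ve}\omega-C,
\end{equation*}
where $F:=(\vp_\ve-f)/\ve$ and the right-hand side encodes the curvature and torsion of $\omega$. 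Substituting $\Delta_\omega F=\ve^{-1}(\mathrm{tr}_\omega\omega'_\ve-n-\Delta_\omega f)$ produces a large positive $\ve^{-1}$ plus a residual of order $\ve^{-1}/\mathrm{tr}_\omega\omega'_\ve$, which is harmless precisely when $\mathrm{tr}_\omega\omega'_\ve$ is large---the only regime where a bound is actually needed. Choosing $A$ large enough that $A\,\mathrm{tr}_{\omega'_\ve}\omega$ absorbs $C\,\mathrm{tr}_{\omega'_\ve}\omega$, the maximum-principle inequality $0\ge\Delta_{\omega'_\ve}H$ forces $\mathrm{tr}_\omega\omega'_\ve$ to be bounded by a universal constant at the maximum point of $H$, whence universally on $M$. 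The principal obstacle is the Hermitian non-K\"ahlerity: the Aubin--Yau lower bound carries torsion--gradient cross terms that do not vanish as in the K\"ahler case, and I must check that these absorb without being spoiled by the singular $\ve^{-1}$ factors from $F$; it is precisely the obstacle inequality $\vp_\ve\le f+C\ve$, which via the arithmetic--geometric mean $\mathrm{tr}_\omega\omega'_\ve\ge n e^{F/n}$ bounds $\mathrm{tr}_\omega\omega'_\ve$ away from $0$, that makes these absorptions work.
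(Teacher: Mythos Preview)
Your strategy for the zero-order estimate and for identifying the limit $\vp_*$ with $\vp_f$ is essentially the paper's, and your remark that the $\ve^{-1}$-term in the differentiated equation carries a favourable sign is the key observation driving the first-order estimate as well. The genuine gap is in the second-order step. What you bound is $\mathrm{tr}_\omega\omega'_\ve$, i.e.\ the \emph{complex} Hessian $\partial\bar\partial\vp_\ve$; this controls the mixed derivatives $\vp_{i\bar j}$ but says nothing about the pure ones $\vp_{ij}$, $\vp_{\bar i\bar j}$. A uniform bound on $\Delta_\omega\vp_\ve$ only yields uniform $W^{2,p}$ (hence $C^{1,\alpha}$, every $\alpha<1$) bounds via Calder\'on--Zygmund; it does \emph{not} give $\|\vp_\ve\|_{C^2}\le C$, and in the limit it recovers only the Berman--Demailly $C^{1,\alpha}$ regularity of $\vp_f$, not $C^{1,1}$. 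There is also no bootstrap available: Evans--Krylov would require a uniform lower bound on $(\omega'_\ve)^n/\omega^n$, which here degenerates as $\ve\to 0$ on the contact set.

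The paper closes this gap by working not with $\log\mathrm{tr}_\omega\omega'_\ve$ but with $\log\lambda_1(\nabla^2\vp_\ve)$, the largest eigenvalue of the full \emph{real} Hessian with respect to the Levi--Civita connection, following \cite{ChToWe16a}. This is considerably more delicate: the test function $\log\lambda_1(\nabla^2\vp)+h_D(|\partial\vp|^2)+e^{-A\vp}$ is chosen so that the third-order terms generated by differentiating $\lambda_1$ can be absorbed, and the auxiliary function $h_D$ is tuned (choosing $D=3\sup|\partial\vp|^2+\|F\|_{C^1}^2$) precisely so that the bad $\ve^{-1}$-contribution from $h_D'L(|\partial\vp|^2)$ is cancelled by the good $\frac{1}{2\ve}$ coming from $V_1V_1(\ve^{-1}(\vp-F))\ge\frac{1}{2\ve}\lambda_1$. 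Your Aubin--Yau computation never sees the $\vp_{ij}$-directions, so it cannot produce this estimate. A secondary point: for $f\in C^{1,1}$ the equation (\ref{CMA0}) need not admit classical solutions, so the paper first proves the estimate for smooth $f$ and then passes to $C^{1,1}$ data by approximation, using the elementary stability $\|\vp_{f_i}-\vp_f\|_{L^\infty}\le\|f_i-f\|_{L^\infty}$; your write-up should make that reduction explicit.
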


It would be also interesting to study the regularity of envelopes with prescribed singularity as
in \cite{Be09, RN}. However, there are still difficulties in deriving the a priori estimates.


The paper is organized as follows.
In Section 2, we establish the uniform a priori estimates for the Monge-Amp\`ere equation \eqref{CMA0}.
In particular, we apply the new techniques in \cite{ChToWe16a} with a modification of the
auxilary function to estimate the second order derivatives. Theorem \ref{C 1,1 regularity of envelope} is proved in Section 3.
In the last section, we give some examples showing that the $C^{1,1}$-regularity is optimal.

\bigskip

{\bf Acknowledgments.} The first-named author would like to thank his advisor G. Tian for encouragement and support. After finishing writing this preprint, we learned that Theorem 1.1 in the case of K\"{a}hler manifolds is independently obtained by Tosatti \cite{To} and solved a problem of Berman.

\section{The a priori estimate}
Let $(M,\omega)$ be a compact Hermitian manifold of complex dimension $n$. We use $g$ and $\nabla$ to denote the corresponding Riemannian metric and Levi-Civita connection (Note that we use Levi-Civita connection, not Chern connection). In this section, we study the a priori estimates of the following complex Monge-Amp\`{e}re equation
\begin{equation}\label{CMAE}
\left\{ \begin{array}{ll}
(\omega+\sqrt{-1}\partial\bar{\partial}\vp)^{n}=e^{\frac{1}{\ve}(\vp-F)}\omega^{n}\\[4pt]
\omega+\sqrt{-1}\partial\bar{\partial}\vp>0
\end{array},\right.
\end{equation}
where $\ve\in(0,1)$ is a constant and
 $F$ is a real-valued $C^{2}$ function on $M$.
The solvability of the equation can be guaranteed by \cite {C, GuLi}.
For our purpose, stronger estimates are needed.
We write $\tilde{\omega}=\omega+\ddbar\vp$  and $\tilde{g}$ be the corresponding Riemannian metric
for convenience. We often use $C$ to denote a uniform constant depending only on $\|F\|_{C^{2}}$ and $(M,\omega)$. All norms $\|\cdot\|_{C^{k}}$ in this paper are taken with respect to $(M,\omega)$. And all the following estimates are uniform with respect to $\ve$.

\begin{proposition}\label{Zero order estimate}
Let $\varphi$ be a smooth solution to (\ref{CMAE}). Then we have
\begin{equation*}
\max_{M}(\vp-F)\leq C_{0}\ve \text{~and~} \min_{M}\vp\geq\min_{M}F,
\end{equation*}
where $C_{0}$ is a constant depending only on $\|F\|_{C^{2}}$ and $(M,\omega)$.
\end{proposition}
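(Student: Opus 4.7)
The plan is to apply the maximum principle to the equation \eqref{CMAE} at both an interior maximum of $\varphi - F$ and an interior minimum of $\varphi$. These are standard arguments in the theory of complex Monge--Amp\`ere equations, and the key point is that $\ve$ is small but the constants on the right-hand side can be bounded independently of $\ve$ in terms of $\|F\|_{C^2}$ and $(M,\omega)$.

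For the upper bound, I would pick a point $x_0 \in M$ where $\varphi - F$ attains its maximum. At $x_0$, the Hessian condition $\sqrt{-1}\partial \overline{\partial}(\varphi - F)(x_0) \leq 0$ gives, as a Hermitian matrix inequality,
\begin{equation*}
\omega + \ddbar \varphi \,\leq\, \omega + \ddbar F \quad \text{at } x_0.
\end{equation*}
Since the left-hand side is positive definite by \eqref{CMAE}, so is the right-hand side at $x_0$, and we may compare determinants:
\begin{equation*}
\frac{(\omega + \ddbar \varphi)^n}{\omega^n}(x_0) \,\leq\, \frac{(\omega + \ddbar F)^n}{\omega^n}(x_0) \,\leq\, C,
\end{equation*}
with $C$ depending only on $\|F\|_{C^2}$ and $(M,\omega)$. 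Inserting this into \eqref{CMAE} yields $e^{\frac{1}{\ve}(\varphi - F)(x_0)} \leq C$, and hence $(\varphi - F)(x_0) \leq \ve \log C =: C_0 \ve$.

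For the lower bound, I would similarly pick $x_1 \in M$ where $\varphi$ attains its minimum. Then $\ddbar\varphi(x_1) \geq 0$, so $\omega + \ddbar\varphi \geq \omega$ at $x_1$, which gives $(\omega + \ddbar\varphi)^n \geq \omega^n$ at $x_1$. Comparing with \eqref{CMAE} at $x_1$ yields $e^{\frac{1}{\ve}(\varphi - F)(x_1)} \geq 1$, i.e.\ $\varphi(x_1) \geq F(x_1) \geq \min_M F$, and since $\varphi(x_1) = \min_M \varphi$, the lower bound follows.

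There is no real obstacle here: this is a direct pointwise maximum/minimum principle argument, not requiring any auxiliary function or integration by parts. The only thing to keep track of carefully is that at $x_0$ one only has the matrix inequality $\omega + \ddbar\varphi \leq \omega + \ddbar F$, which by itself does not yet give a determinant bound unless one knows both sides are positive definite; this is automatic from the equation on the left-hand side. The heavier analytic machinery of \cite{C, GuLi} only enters implicitly through the a priori existence of a smooth solution; the $C^0$-bound itself is soft.
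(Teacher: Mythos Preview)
Your proof is correct and follows essentially the same approach as the paper: apply the maximum principle at a maximum of $\vp-F$ and at a minimum of $\vp$, then compare determinants. Your additional remark about positivity of $\omega+\ddbar F$ at $x_0$ (needed for the determinant comparison) is a nice clarification that the paper leaves implicit.
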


\begin{proof}
First, we assume that $(\vp-F)$ attains its maximum at $p\in M$. By maximum principle, it is clear that
\begin{equation*}
\ddbar\vp(p)-\ddbar F(p)\leq 0,
\end{equation*}
which implies
\begin{equation*}
(\vp-F)(p)=\ve\log\frac{(\omega+\ddbar\vp)^{n}}{\omega^{n}}(p)
\leq\ve\log\frac{(\omega+\ddbar F)^{n}}{\omega^{n}}(p)\leq C_{0}\ve.
\end{equation*}
By the definition of $p$, we obtain
\begin{equation}\label{Zero order estimate equation 1}
\max_{M}(\vp-F)\leq C_{0}\ve.
\end{equation}
Next, we assume that $\vp(q)=\displaystyle\min_{M}\vp$ for $q\in M$. By a similar argument, we have
\begin{equation*}
(\vp-F)(q)=\ve\log\frac{(\omega+\ddbar\vp)^{n}}{\omega^{n}}(q)\geq 0,
\end{equation*}
which implies
\begin{equation}\label{Zero order estimate equation 2}
\min_{M}\vp=\vp(q)\geq F(q)\geq\min_{M}F.
\end{equation}
Combining (\ref{Zero order estimate equation 1}) and (\ref{Zero order estimate equation 2}), we complete the proof.
\end{proof}

The following proposition is the gradient estimate of (\ref{CMAE}). It is established by Blocki \cite{Blo09} in K\"{a}hler case. For the Hermitian case, we use some calculations in \cite{ChToWe16a} to prove Proposition \ref{Gradient estimate}, but the idea is similar with \cite{Blo09}.

\begin{proposition}\label{Gradient estimate}
If $\vp$ is a smooth solution to (\ref{CMAE}), then there exists a constant $C$ depending only on $\|F\|_{C^{1}}$ and $(M,\omega)$ such that
\begin{equation*}
\sup_{M}|\partial\vp|_{g}\leq C.
\end{equation*}
\end{proposition}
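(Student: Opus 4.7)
The plan is to apply the maximum principle to the auxiliary function
$$Q = \log\beta + h(\vp), \qquad \beta := |\partial\vp|^{2}_{g},$$
where $h$ is a function of one real variable to be chosen later. Since Proposition~\ref{Zero order estimate} gives a uniform bound on $\vp$, $h(\vp)$ is bounded, so it suffices to bound $\beta$ at a point $p\in M$ where $Q$ attains its maximum. I would work in holomorphic coordinates around $p$ with $g_{i\bar j}(p)=\delta_{ij}$ (and $\tilde g_{i\bar j}(p)$ diagonal). The critical point condition $\partial_{k}Q(p)=0$ reads $\partial_{k}\beta = -h'(\vp)\beta\,\vp_{k}$, and this in turn gives
$$\frac{|\partial\beta|^{2}_{\tilde g}}{\beta^{2}} = h'(\vp)^{2}\,|\partial\vp|^{2}_{\tilde g},$$
which cancels the negative gradient-of-gradient term coming out of $\tilde\Delta\log\beta$.

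Next, I would compute $\tilde\Delta\beta$ and extract the leading third-order piece $\tilde g^{k\bar l}\vp_{k\bar l i}\vp_{\bar i}$. Using that partial derivatives commute and $\vp_{k\bar l}=\tilde g_{k\bar l}-g_{k\bar l}$, differentiation of $\log\det\tilde g = \log\det g + \frac{1}{\ve}(\vp-F)$ yields
$$\tilde g^{k\bar l}\vp_{k\bar l i} = \frac{1}{\ve}(\vp_{i}-F_{i}) + g^{k\bar l}g_{k\bar l,i} - \tilde g^{k\bar l}g_{k\bar l,i}.$$
Contracting with $\vp_{\bar i}$ and taking real parts, the leading Monge--Amp\`ere contribution $\frac{1}{\ve}(\beta - \mathrm{Re}\sum_{i}\vp_{\bar i}F_{i}) \ge \tfrac{\beta}{2\ve} - C/\ve$ (valid once $\beta$ is large) is the positive source term we shall play off against everything else. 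The remaining $h'(\vp)\tilde\Delta\vp = h'(\vp)(n - \mathrm{tr}_{\tilde g}g)$ is engineered to contribute positively by taking, e.g., $h(\vp)=-A\vp$ with $A$ large, producing $+A\,\mathrm{tr}_{\tilde g}g$; by AM--GM and the upper bound $\vp - F\le C_{0}\ve$, we have $\mathrm{tr}_{\tilde g}g\ge n(\det\tilde g)^{-1/n}\ge c>0$, so this is a genuine positive quantity of size $A$. Plugging all this into $\tilde\Delta Q(p)\le 0$ and using $|\partial\vp|^{2}_{\tilde g}\le\mathrm{tr}_{\tilde g}g\cdot\beta$ to bound the quadratic gradient term, one is led to $\beta(p)\le C$.

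The main obstacle is the Hermitian torsion. In the K\"ahler case of \cite{Blo09} one may choose coordinates with $g_{i\bar j,k}(p)=0$, which kills the terms $g^{k\bar l}g_{k\bar l,i}$ and $\tilde g^{k\bar l}g_{k\bar l,i}$ above; no such coordinates exist in the general Hermitian setting, and the commutators of covariant derivatives produce further torsion contributions in the expansion of $\tilde\Delta\beta$ that do not appear in \cite{Blo09}. The technical point, following \cite{ChToWe16a}, is to absorb these residual torsion terms: the "bad" pieces scale like $\sqrt{\beta}\cdot\mathrm{tr}_{\tilde g}g$ or $\sqrt{\beta}$ and, after division by $\beta$, can be dominated by the Monge--Amp\`ere source $\frac{1}{2\ve}$ and the large positive term $A\,\mathrm{tr}_{\tilde g}g$ provided $A$ is chosen sufficiently large and $\beta(p)$ is assumed large; a slight modification of $h$ (rather than the plain linear choice) may be needed to secure the absorption uniformly in $\ve$.
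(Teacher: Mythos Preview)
Your overall strategy—apply the maximum principle to a quantity of the form $\log|\partial\vp|_g^{2}+h(\vp)$, differentiate the Monge--Amp\`ere equation, and absorb the Hermitian torsion terms along the lines of \cite{ChToWe16a}—is exactly the paper's approach.  The gap is in the choice of $h$ and in what you claim the critical-point equation accomplishes.

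The critical-point identity does not ``cancel'' anything: it only rewrites $-\,|\partial\beta|^{2}_{\tilde g}/\beta^{2}$ as $-(h')^{2}|\partial\vp|^{2}_{\tilde g}$, which is still a negative term.  With the linear choice $h(\vp)=-A\vp$ you have $h''=0$, so the combination $\bigl(h''-(h')^{2}\bigr)|\partial\vp|^{2}_{\tilde g}=-A^{2}|\partial\vp|^{2}_{\tilde g}$ survives with the wrong sign.  Bounding $|\partial\vp|^{2}_{\tilde g}\le \mathrm{tr}_{\tilde g}g\cdot\beta$ and trying to absorb by $A\,\mathrm{tr}_{\tilde g}g$ fails once $\beta>1/A$; and the $\tfrac{1}{\ve}$ source term cannot rescue this uniformly, since for $\ve$ bounded away from $0$ it is of size $O(1)$ while $A^{2}|\partial\vp|^{2}_{\tilde g}$ has no a priori control.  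So the scheme, as written, does not close.

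The paper resolves this by taking $h(\vp)=f(\vp)=\tfrac{1}{A}e^{-A(\vp-1)}$ (after normalizing $\sup_{M}\vp\le0$) and, following \cite{ChToWe16a}, introducing a free Cauchy--Schwarz parameter $\delta$ in the absorption of the torsion cross-terms.  One then chooses $A=12C_{0}$ and $\delta=\tfrac{A}{6}e^{A(\vp(p)-1)}$ so that simultaneously
\[
f''-3\delta(f')^{2}\ge C^{-1}\quad\text{and}\quad -f'-C_{0}\delta^{-1}\ge C^{-1},
\]
which makes the coefficients of \emph{both} $\beta\,|\partial\vp|^{2}_{\tilde g}$ and $\beta\,\mathrm{tr}_{\tilde g}g$ strictly positive in the key inequality.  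The $\tfrac{1}{\ve}$ contribution is handled differently from what you propose: the paper simply assumes (without loss of generality) that $|\partial\vp|_{g}\ge\sup_{M}|\partial F|_{g}$ at $p$, whence $\mathrm{Re}\sum_{i}\tfrac{1}{\ve}(\vp_{i}-F_{i})\vp_{\bar i}\ge0$, and discards this term rather than using it as a source.  Your final sentence (``a slight modification of $h$ may be needed'') is pointing at exactly this issue, but the convexity of $h$ together with the $\delta$-trick is the substance of the argument in the Hermitian case, not a cosmetic adjustment.
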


\begin{proof}
First, without loss of generality, we assume $\displaystyle\sup_{M}\vp\leq 0$. Otherwise, we consider the following functions
\begin{equation*}
\psi=\vp-\|\vp\|_{L^{\infty}} \text{~and~} \tilde{F}=F-\|\vp\|_{L^{\infty}}.
\end{equation*}
It then follows that $\displaystyle\sup_{M}\psi\leq 0$  and
\begin{equation*}
 (\omega+\sqrt{-1}\partial\bar{\partial}\psi)^{n}=e^{\frac{1}{\ve}(\psi-\tilde{F})}\omega^{n}.
\end{equation*}
By the definition of $\tilde{F}$ and Proposition \ref{Zero order estimate}, it is clear that $\|\tilde{F}\|_{C^{1}}\leq\|F\|_{C^{1}}+C$.

As in \cite{ChToWe16a} (see Proposition 4.1 in \cite{ChToWe16a}), we consider the following quantity
\begin{equation*}
Q=e^{f(\vp)}|\partial\vp|_{g}^{2},
\end{equation*}
where $f(t)=\frac{1}{A}e^{-A(t-1)}$ and $A$ is a constant to be determined. We assume that $Q$ attains its maximum at $p\in M$. Let $\{e_{i}\}_{i=1}^{n}$ be a local holomorphic frame for $T^{(1,0)}M$ near $p$, such that $\{e_{i}\}_{i=1}^{n}$ is unitary with respect to $g$ and $\tilde{g}_{i\overline{j}}(p)$ is diagonal. For convenience, we write $\vp_{i}=e_{i}(\vp)$ and $\vp_{\overline{i}}=\overline{e}_{i}(\vp)$. By (4.13) in \cite{ChToWe16a} (To avoid confusion of notations, we replace $\ve$ in (4.13) by $\delta$ and $F$ in (4.13) should be replaced by $\frac{1}{\ve}(\vp-F)$), at $p$, for any $\delta\in(0,\frac{1}{2}]$, we have
\begin{eqnarray}\label{Gradient estimate equation 1}
0 &\geq& e^{f}(f''-3\delta(f')^{2})|\partial\vp|_{g}^{2}|\partial\vp|_{\tilde{g}}^{2}
+e^{f}(-f'-C_{0}\delta^{-1})|\partial\vp|_{g}^{2}\sum_{i}\tilde{g}^{i\overline{i}}\nonumber\\
&&+2e^{f}\mathrm{Re}\left(\sum_{i}\frac{1}{\ve}(\vp_{i}-F_{i})\vp_{i}\right)
+(2+n)e^{f}f'|\partial\vp|_{g}^{2}-2e^{f}f'|\partial\vp|_{\tilde{g}}^{2},
\end{eqnarray}
where $C_{0}$ is a constant depending only on $\|F\|_{C^{1}}$ and $(M,\omega)$. Now, we choose $A=12C_{0}$ and $\delta=\frac{A}{6}e^{A(\vp(p)-1)}$. By direct calculations and Proposition \ref{Zero order estimate}, it is clear that
\begin{equation}\label{Gradient estimate equation 2}
f''-3\delta(f')^{2}\geq C^{-1} \text{~and~} -f'-C_{0}\delta^{-1}\geq C^{-1}.
\end{equation}
Without loss of generality, we assume that $|\partial\vp|_{g}\geq\displaystyle\sup_{M}|\partial F|_{g}$ at $p$, which implies
\begin{equation}\label{Gradient estimate equation 3}
\mathrm{Re}\left(\sum_{i}\frac{1}{\ve}(\vp_{i}-F_{i})\vp_{i}\right)
\geq \frac{1}{\ve}\left(|\partial\vp|_{g}^{2}-|\partial F|_{g}|\partial\vp|_{g}\right)\geq 0.
\end{equation}
Combining (\ref{Gradient estimate equation 1}), (\ref{Gradient estimate equation 2}) and (\ref{Gradient estimate equation 3}), we get
\begin{equation*}
0 \geq C^{-1}|\partial\vp|_{g}^{2}|\partial\vp|_{\tilde{g}}^{2}+C^{-1}|\partial\vp|_{g}^{2}\sum_{i}\tilde{g}^{i\overline{i}}-C|\partial\vp|_{g}^{2}-C
\end{equation*}
at $p$. Then by the similar argument of \cite{ChToWe16a}, we obtain $|\partial\vp|_{g}^{2}(p)\leq C$, which completes the proof of Proposition \ref{Gradient estimate}.
\end{proof}

\begin{proposition}\label{Second order estimate}
If $\vp$ is a smooth solution of (\ref{CMAE}), then there exists a constant $C$ depending only on $\|F\|_{C^{2}}$ and $(M,\omega)$, such that
\begin{equation*}
\sup_{M}|\nabla^{2}\vp|_{g}\leq C,
\end{equation*}
where $\nabla$ is the Levi-Civita connection of $(M,\omega)$.
\end{proposition}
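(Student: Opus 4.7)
The plan is to adapt the second-order estimate of \cite{ChToWe16a} to our setting. Because $\tilde\omega > 0$ already gives $\operatorname{tr}_g \nabla^2 \vp \geq -C$, a uniform upper bound on the largest eigenvalue $\lambda_1$ of the real Hessian $\nabla^2 \vp$ is enough to bound $|\nabla^2\vp|_g$. I would therefore apply the maximum principle to
\begin{equation*}
Q = \log\lambda_1 + \phi(|\partial\vp|_g^2) + \psi(\vp),
\end{equation*}
with $\phi$ and $\psi$ chosen in the spirit of \cite{ChToWe16a}, for instance $\phi(s) = -\tfrac{1}{2}\log(K-s)$ for some $K$ slightly larger than $\sup_M |\partial\vp|_g^2$ (finite by Proposition \ref{Gradient estimate}), and $\psi(t) = -At$ with $A$ large. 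If $\lambda_1$ has multiplicity at the maximum point, I would use the standard smooth perturbation of the eigenvalue.

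Let $p$ be a maximum point of $Q$, pick a $g$-unitary frame diagonalizing $\tilde g$ at $p$, and compute $\tilde\Delta Q(p)\leq 0$, where $\tilde\Delta=\tilde g^{i\bar j}\partial_i\partial_{\bar j}$. The term $\tilde\Delta\log\lambda_1$ generates (a) a fourth-order piece involving $\tilde g^{i\bar j}\nabla_i\nabla_{\bar j}(\nabla_V\nabla_V\vp)$ along the top real eigendirection $V$; (b) third-order terms from commutators and from the Hermitian torsion of $(M,\omega)$; and (c) a non-negative remainder. To handle (a) I would differentiate the equation $\log\det\tilde g = (\vp - F)/\ve$ twice in the direction $V$, which lets me trade (a) for the positive term $\nabla^2(\vp - F)(V,V)/\ve \geq (\lambda_1 - C)/\ve$: this is the key benefit of the exponential right-hand side, since large $\lambda_1$ is penalized by a factor $1/\ve$. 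The bad contributions in (b) are absorbed by the positive squares produced by $\tilde\Delta\phi(|\partial\vp|_g^2)$ via $\phi''$, and any residual $O(\sum_i \tilde g^{i\bar i})$ terms are dominated by $A\sum_i \tilde g^{i\bar i}$ coming from $\tilde\Delta\psi(\vp)$ once $A$ is chosen large enough. First-derivative terms of order $\ve^{-1}$ that appear from differentiating the equation once are controlled via the critical condition $\nabla Q(p) = 0$ combined with Propositions \ref{Zero order estimate} and \ref{Gradient estimate}.

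Putting everything together, the maximum-principle inequality should collapse to something schematically of the form $0 \geq \lambda_1/\ve - C\lambda_1 - C$, which forces $\lambda_1(p)\leq C$ and, since $\phi$ and $\psi$ are bounded independently of $\ve$, yields the uniform bound on $|\nabla^2\vp|_g$. I expect the main obstacle to be step (b): on a non-K\"ahler manifold the third-order torsion terms in $\tilde\Delta\log\lambda_1$ must be matched very carefully against the $\phi$-squares, which is the central technical content of \cite{ChToWe16a}. The ``modification of the auxiliary function'' mentioned in the introduction is presumably a tuning of $\phi$ (or $\psi$) to simultaneously absorb the $\ve^{-1}$ first-derivative contributions created by the exponential nonlinearity, which do not appear in the $\ve$-independent setting of \cite{ChToWe16a}.
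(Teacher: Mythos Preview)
Your overall architecture matches the paper: maximum principle applied to $\log\lambda_1$ plus a gradient term plus a potential term, perturbation of the top eigenvalue, and differentiation of $\log\det\tilde g=(\vp-F)/\ve$ twice along the top eigendirection. Two points, however, are not right.

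First, and this is the heart of the matter, the balance of the $1/\ve$ contributions does not work as you describe. After dividing $L(\lambda_1)$ by $\lambda_1$, the good second-derivative term $V_1V_1((\vp-F)/\ve)$ contributes only $+1/(2\ve)$, not $\lambda_1/\ve$; so your schematic inequality $0\ge \lambda_1/\ve - C\lambda_1 - C$ is incorrect. On the other side, $\phi' L(|\partial\vp|_g^2)$ produces, via one differentiation of the equation, a term $2\phi'\,\mathrm{Re}\sum_i \tfrac{1}{\ve}(\vp_i-F_i)\vp_{\bar i}\ge -\tfrac{\phi'}{\ve}\bigl(3\sup_M|\partial\vp|_g^2+\|F\|_{C^1}^2\bigr)$. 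These are zeroth-order quantities times $1/\ve$ and cannot be traded away using $\nabla Q(p)=0$, which only relates the third-, second-, and first-order pieces $e_i(\lambda_1)$, $e_i(|\partial\vp|^2)$, $e_i(\vp)$. The paper's actual fix is precisely the ``tuning of $\phi$'' you guess at: one replaces $\phi(s)=-\tfrac12\log(K-s)$ with $K$ slightly above $\sup_M|\partial\vp|_g^2$ by $h_D(s)=-\tfrac12\log(D+\sup_M|\partial\vp|_g^2-s)$ and takes $D=3\sup_M|\partial\vp|_g^2+\|F\|_{C^1}^2$. Then $h_D'\le 1/(2D)$, and the two $1/\ve$ contributions exactly cancel:
\[
\frac{1}{2\ve}-\frac{h_D'}{\ve}\bigl(3\sup_M|\partial\vp|_g^2+\|F\|_{C^1}^2\bigr)\ge 0.
\]
After this cancellation there is no residual $\ve$-dependence, and the remaining inequality is exactly the one in \cite[Lemma~5.4]{ChToWe16a}, so one closes via the case analysis there (the paper also notes that this case analysis does not need a lower bound on $\tilde\omega^n/\omega^n$, which is unavailable here).

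Second, the paper takes $\psi(\vp)=e^{-A\vp}$ rather than $-A\vp$. The point is that $\psi''=A^2e^{-A\vp}>0$ supplies the positive term $A^2e^{-A\vp}\,\tilde g^{i\bar i}|e_i(\vp)|^2$, which the \cite{ChToWe16a} case analysis uses (together with $h_D''=2(h_D')^2$) to absorb the third-order quantity $\tilde g^{i\bar i}|e_i(\vp_{V_1V_1})|^2/\lambda_1^2$ after applying the critical equation. With $\psi(t)=-At$ one has $\psi''=0$, and that absorption step no longer goes through as written.
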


\begin{proof}
First, without loss of generality, we assume that $\displaystyle\sup_{M}\vp\leq 0$ as in the proof of Proposition \ref{Gradient estimate}.

For $\nabla^{2}\vp$, we use $\lambda_{1}(\nabla^{2}\vp)$ to denote its largest eigenvalue. Since $\omega+\ddbar \vp>0$, it is clear that $|\nabla^{2}\vp|_{g}\leq C\lambda_{1}(\nabla^{2}\vp)+C$ for a uniform constant $C$ (see (5.1) in \cite{ChToWe16a}). Then we apply maximum principle to the following quantity
\begin{equation*}
Q=\log\lambda_{1}(\nabla^{2}\vp)+h_{D}(|\partial \vp|_{g}^{2})+e^{-A\vp},
\end{equation*}
where
$$h_{D}(s)=-\frac{1}{2}\log(D+\sup_{M}|\partial \vp|_{g}^{2}-s)$$ and $A, D>1$ are constants to be determined. It then follows that
\begin{equation}\label{Properties of h D}
\frac{1}{2D}\geq h_{D}' \geq \frac{1}{2D+2\displaystyle\sup_{M}|\partial \vp|_{g}^{2}} \text{~and~} h_{D}''=2(h_{D}')^{2}.
\end{equation}
Here the definition of $h_{D}$ is different from the definition of $h$ in \cite{ChToWe16a}. In fact, we will choose $D$ suitably to deal with the bad terms arise from the right hand side of (\ref{CMAE}).

We assume the set $\{x\in M~|~\lambda_{1}(\nabla^{2}\vp)>0\}$ is nonempty, otherwise we get $Q\leq C$ directly. Let $p$ be the maximum point of $Q$, i.e., $Q(p)=\displaystyle\max_{M}Q$. As before, we can find local holomorphic frame $\{e_{i}\}_{i=1}^{n}$ for $T^{(1,0)}M$ near $p$, such that
\begin{enumerate}[(1)]
    \item $\{e_{i}\}_{i=1}^{n}$ is unitary with respect to $g$.
    \item At $p$, we have $\tilde{g}_{i\overline{j}}$ is diagonal and
          \begin{equation*}
          \tilde{g}_{1\overline{1}}\geq\tilde{g}_{2\overline{2}}\geq\cdots\geq\tilde{g}_{n\overline{n}}.
          \end{equation*}
\end{enumerate}
Since $(M,\omega)$ is a Hermitian manifold, there exists a real coordinates $\{x^{\alpha}\}_{\alpha=1}^{2n}$ near $p$, such that
\begin{enumerate}[(1)]
    \item At $p$, for any $k=1,2,\cdots,n$, we have
          \begin{equation*}
          e_{k}=\frac{1}{\sqrt{2}}\left(\frac{\partial}{\partial x^{2k-1}}-\sqrt{-1}\frac{\partial}{\partial x^{2k}}\right).
          \end{equation*}
    \item At $p$, for any $\alpha,\beta,\gamma=1,2,\cdots,2n$, we have
          \begin{equation*}
          \frac{\partial g_{\alpha\beta}}{\partial x^{\gamma}}=0.
          \end{equation*}
\end{enumerate}

Let $V_{1}, V_{2}, \cdots, V_{2n}$ be the unit eigenvectors of $\nabla^{2}\vp$ (with respect to $g$) at $p$, corresponding to eigenvalues $\lambda_{1}(\nabla^{2}\vp)\geq\lambda_{2}(\nabla^{2}\vp)\geq\cdots\geq\lambda_{2n}(\nabla^{2}\vp)$. Extend $\{V_{\alpha}\}_{\alpha=1}^{2n}$ to be vector fields near $p$ by taking the components (in above local real coordinates) to be constant.

However, at $p$, $Q$ is not smooth if $\lambda_{1}(\nabla^{2}\vp)=\lambda_{2}(\nabla^{2}\vp)$. To avoid this, we use a perturbation argument (see \cite{Ch,ChToWe16a,ChToWe16b,Sze15,SzToWe15}). As in \cite{ChToWe16a}, near $p$, we consider the following perturbed smooth quantity
\begin{equation*}
\hat{Q}=\log\lambda_{1}(\Phi)+h_{D}(|\partial \vp|_{g}^{2})+e^{-A\vp},
\end{equation*}
where $\Phi=({\Phi^{\alpha}}_{\beta})$ is a local endomorphism of $TM$ given by \cite{ChToWe16a}
\begin{eqnarray*}
{\Phi^{\alpha}}_{\beta}&=& g^{\alpha\gamma}\nabla^2_{\gamma\beta}\varphi-g^{\alpha\gamma}B_{\gamma\beta},\\
B_{\alpha\beta}&=& \delta_{\alpha\beta}-{V^\alpha}_1{V^\beta}_1.
\end{eqnarray*}
Here $\{{V^\alpha}_1\}_{\alpha=1}^{2n}$ are the components of $V_1$ at $p$. Then
$p$ is still local maximum point of $\hat{Q}$. By the definition of $\Phi$, at $p$, $V_{1}, V_{2}, \cdots, V_{2n}$ are eigenvectors of $\Phi$ corresponding to eigenvalues $\lambda_{1}(\Phi)>\lambda_{2}(\Phi)\geq\cdots\geq\lambda_{2n}(\Phi)$. For convenience, in the following argument, we use $\lambda_{\alpha}$ and $\vp_{V_{\alpha}V_{\beta}}$ to denote $\lambda_{\alpha}(\Phi)$ and $\nabla^{2}\vp(V_{\alpha},V_{\beta})$ respectively.

\begin{lemma}\label{Preliminary Computations}
There exists a uniform $C>0$ such that
if $\lambda_{1}\geq C\displaystyle\sup_{M}|\partial\vp|_{g}+C\|F\|_{C^{2}}$ at $p$, then we have
\begin{eqnarray}\label{Gradient Computation}
L(|\partial\vp|_{g}^{2})&\geq& \frac{1}{2}\sum_{k}\tilde{g}^{i\overline{i}}(|e_{i}e_{k}(\vp)|^{2}+|e_{i}\overline{e}_{k}(\vp)|^{2})
-C\sum_{i}\tilde{g}^{i\overline{i}}\nonumber\\
&&-\frac{1}{\ve}\left(3\sup_{M}|\partial\vp|_{g}^{2}+\|F\|_{C^{1}}^{2}\right)
\end{eqnarray}
and
\begin{eqnarray}\label{Lambda 1 computation}
L(\lambda_{1}) &\geq& 2\sum_{\alpha>1}\tilde{g}^{i\overline{i}}\frac{|e_{i}(\vp_{V_{\alpha}V_{1}})|^{2}}{\lambda_{1}-\lambda_{\alpha}}
+\tilde{g}^{p\overline{p}}\tilde{g}^{q\overline{q}}|V_{1}(\tilde{g}_{p\overline{q}})|^{2}-2\tilde{g}^{i\overline{i}}[V_{1},e_{i}]V_{1}\overline{e}_{i}(\vp)\nonumber\\
&&-2\tilde{g}^{i\overline{i}}[V_{1},\overline{e}_{i}]V_{1}e_{i}(\vp)-C\lambda_{1}\sum_{i}\tilde{g}^{i\overline{i}}+\frac{1}{2\ve}\lambda_{1}.
\end{eqnarray}
where $L=\tilde g^{i\bar j}(e_i\bar e_j-[e_i, \bar e_j]^{(1,0)})$ is the operator defined in \cite[p.12]{ChToWe16a}.
\end{lemma}

\begin{proof}
For (\ref{Gradient Computation}), by (4.8) in \cite{ChToWe16a} (as before, we replace $\ve$ and $F$ in (4.13) by $\delta$ and $\frac{1}{\ve}(\vp-F)$ to avoid confusion of notations), we have
\begin{eqnarray*}
L(|\partial\vp|_{g}^{2}) & \geq & (1-\delta)\sum_{k}\tilde{g}^{i\overline{i}}(|e_{i}e_{k}(\vp)|^{2}+|e_{i}\overline{e}_{k}(\vp)|^{2})
-C\delta^{-1}|\partial\vp|_{g}^{2}\sum_{i}\tilde{g}^{i\overline{i}}\\
&&+2\mathrm{Re}\left(\sum_{i}\frac{1}{\ve}(\vp_{i}-F_{i})\vp_{\overline{i}}\right)
\end{eqnarray*}
at $p$. Now we take $\delta=\frac{1}{2}$. By Proposition \ref{Gradient estimate}, we obtain
\begin{eqnarray}\label{Preliminary Computations equation 1}
L(|\partial\vp|_{g}^{2}) & \geq & \frac{1}{2}\sum_{k}\tilde{g}^{i\overline{i}}(|e_{i}e_{k}(\vp)|^{2}+|e_{i}\overline{e}_{k}(\vp)|^{2})
-C\sum_{i}\tilde{g}^{i\overline{i}}\nonumber\\
&&+2\mathrm{Re}\left(\sum_{i}\frac{1}{\ve}(\vp_{i}-F_{i})\vp_{\overline{i}}\right).
\end{eqnarray}
By Cauchy inequality, it is clear that
\begin{equation}\label{Preliminary Computations equation 2}
2\mathrm{Re}\left(\sum_{i}\frac{1}{\ve}(\vp_{i}-F_{i})\vp_{\overline{i}}\right)
\geq -\frac{1}{\ve}\left(3\sup_{M}|\partial\vp|_{g}^{2}+\|F\|_{C^{1}}^{2}\right).
\end{equation}
Combining (\ref{Preliminary Computations equation 1}) and (\ref{Preliminary Computations equation 2}), we obtain (\ref{Gradient Computation}).

For (\ref{Lambda 1 computation}), by (5.11) and (5.12) in \cite{ChToWe16a}, we have
\begin{eqnarray}\label{Preliminary Computations equation 3}
L(\lambda_{1}) & \geq & 2\sum_{\alpha>1}\tilde{g}^{i\overline{i}}\frac{|e_{i}(\vp_{V_{\alpha}V_{1}})|^{2}}{\lambda_{1}-\lambda_{\alpha}}
+\tilde{g}^{i\overline{i}}V_{1}V_{1}(\tilde{g}_{i\overline{i}})-2\tilde{g}^{i\overline{i}}[V_{1},e_{i}]V_{1}\overline{e}_{i}(\vp)\nonumber\\
&&-2\tilde{g}^{i\overline{i}}[V_{1},\overline{e}_{i}]V_{1}e_{i}(\vp)-C\lambda_{1}\sum_{i}\tilde{g}^{i\overline{i}}
\end{eqnarray}
at $p$. In the local frame $\{e_{i}\}_{i=1}^{n}$, the complex Monge-Amp\`{e}re equation (\ref{CMAE}) can be written as
\begin{equation*}
\log\det\tilde{g}=\frac{1}{\ve}(\vp-F).
\end{equation*}
Differentiating the equation twice with $V_{1}$, we obtain
\begin{equation}\label{Preliminary Computations equation 4}
\tilde{g}^{i\overline{i}}V_{1}V_{1}(\tilde{g}_{i\overline{i}})=\tilde{g}^{p\overline{p}}\tilde{g}^{q\overline{q}}|V_{1}(g_{p\overline{q}})|^{2}
+V_{1}V_{1}\left(\frac{1}{\ve}(\vp-F)\right).
\end{equation}
Assume $\lambda_{1}\geq C\displaystyle\sup_{M}|\partial\vp|_{g}+C\|F\|_{C^{2}}$ at $p$.
When $C$ is sufficiently large,
\begin{eqnarray}\label{Preliminary Computations equation 5}
V_{1}V_{1}\left(\frac{1}{\ve}(\vp-F)\right)
& = & \frac{1}{\ve}\left(\lambda_{1}+(\nabla_{V_{1}}V_{1})\vp-V_{1}V_{1}(F)\right)\nonumber\\
& \geq & \frac{1}{2\ve}\lambda_{1}.
\end{eqnarray}
Then (\ref{Lambda 1 computation}) follows from (\ref{Preliminary Computations equation 3}), (\ref{Preliminary Computations equation 4}) and (\ref{Preliminary Computations equation 5}).
\end{proof}

\begin{lemma}\label{Analogue of Lemma 5.4}There exists a uniform $C>0$ such that
if $\lambda_{1}\geq C\displaystyle\sup_{M}|\partial\vp|_{g}+C\|F\|_{C^{2}}$ at $p$,
then for any $\delta\in(0,\frac{1}{2}]$, we have
\begin{eqnarray*}
0 &\geq & (2-\delta)\sum_{\alpha>1}\tilde{g}^{i\overline{i}}\frac{|e_{i}(\vp_{V_{\alpha}V_{1}})|^{2}}{\lambda_{1}(\lambda_{1}-\lambda_{\alpha})}
+\frac{\tilde{g}^{p\overline{p}}\tilde{g}^{q\overline{q}}|V_{1}(\tilde{g}_{p\overline{q}})|^{2}}{\lambda_{1}}
-(1+\delta)\frac{\tilde{g}^{i\overline{i}}|e_{i}(\vp_{V_{1}V_{1}})|^{2}}{\lambda_{1}^{2}}\\[4pt]
&& + \frac{h_{D}'}{2}\sum_{k}\tilde{g}^{i\overline{i}}(|e_{i}e_{k}(\vp)|^{2}+|e_{i}\overline{e}_{k}(\vp)|^{2})
+h_{D}''\tilde{g}^{i\overline{i}}|\partial_{i}|\partial\vp|_{g}^{2}|^{2}\\
&& + (Ae^{-A\vp}-\frac{C}{\delta})\sum_{i}\tilde{g}^{i\overline{i}}+A^{2}e^{-A\vp}\tilde{g}^{i\overline{i}}|e_{i}(\vp)|^{2}-Ane^{-A\vp}.
\end{eqnarray*}
\end{lemma}

\begin{proof}
First, by direct calculations, at $p$, we have
\begin{eqnarray}\label{Analogue of Lemma 5.4 equation 1}
L(\hat{Q}) &=& \frac{L(\lambda_{1})}{\lambda_{1}}-\frac{\tilde{g}^{i\overline{i}}|e_{i}(\lambda_{1})|^{2}}{\lambda_{1}^{2}}
+h_{D}'L(|\partial\vp|_{g}^{2})+h_{D}''\tilde{g}^{i\overline{i}}|e_{i}|\partial\vp|_{g}^{2}|^{2}\nonumber\\
&& -Ae^{-A\vp}L(\vp)+A^{2}e^{-A\vp}\tilde{g}^{i\overline{i}}|e_{i}(\vp)|^{2}.
\end{eqnarray}
By the proof of Lemma 5.4 in \cite{ChToWe16a}, for any $\delta\in(0,\frac{1}{2}]$, we get
\begin{eqnarray}\label{Analogue of Lemma 5.4 equation 2}
&& 2\frac{\tilde{g}^{i\overline{i}}[V_{1},e_{i}]V_{1}\overline{e}_{i}(\vp)
+\tilde{g}^{i\overline{i}}[V_{1},\overline{e}_{i}]V_{1}e_{i}(\vp)}{\lambda_{1}}\nonumber\\
& \leq & \delta\frac{\tilde{g}^{i\overline{i}}|e_{i}(\vp_{V_{1}V_{1}})|^{2}}{\lambda_{1}^{2}}
+\delta\sum_{\alpha>1}\tilde{g}^{i\overline{i}}\frac{|e_{i}(\vp_{V_{\alpha}V_{1}})|^{2}}{\lambda_{1}(\lambda_{1}-\lambda_{\alpha})}
+\frac{C}{\delta}\sum_{i}\tilde{g}^{i\overline{i}}.
\end{eqnarray}
For the first term of (\ref{Analogue of Lemma 5.4 equation 1}), by (\ref{Lambda 1 computation}) and (\ref{Analogue of Lemma 5.4 equation 2}),
\begin{eqnarray}\label{Analogue of Lemma 5.4 equation 3}
\frac{L(\lambda_{1})}{\lambda_{1}} & \geq &
(2-\delta)\sum_{\alpha>1}\tilde{g}^{i\overline{i}}\frac{|e_{i}(\vp_{V_{\alpha}V_{1}})|^{2}}{\lambda_{1}(\lambda_{1}-\lambda_{\alpha})}
+\frac{\tilde{g}^{p\overline{p}}\tilde{g}^{q\overline{q}}|V_{1}(\tilde{g}_{p\overline{q}})|^{2}}{\lambda_{1}}\nonumber\\
&& -\delta\frac{\tilde{g}^{i\overline{i}}|e_{i}(\vp_{V_{1}V_{1}})|^{2}}{\lambda_{1}^{2}} -\frac{C}{\delta}\sum_{i}\tilde{g}^{i\overline{i}}+\frac{1}{2\ve}.
\end{eqnarray}
For the second term of (\ref{Analogue of Lemma 5.4 equation 1}), by Lemma 5.2 in \cite{ChToWe16a}, we obtain
\begin{equation}\label{Analogue of Lemma 5.4 equation 4}
-\frac{\tilde{g}^{i\overline{i}}|e_{i}(\lambda_{1})|^{2}}{\lambda_{1}^{2}}
=-\frac{\tilde{g}^{i\overline{i}}|e_{i}(\vp_{V_{1}V_{1}})|^{2}}{\lambda_{1}^{2}}.
\end{equation}
For the third term of (\ref{Analogue of Lemma 5.4 equation 1}), by (\ref{Properties of h D}) and (\ref{Gradient Computation}), we have
\begin{eqnarray}\label{Analogue of Lemma 5.4 equation 5}
h_{D}'L(|\partial\vp|_{g}^{2}) &\geq&
\frac{h_{D}'}{2}\sum_{k}\tilde{g}^{i\overline{i}}(|e_{i}e_{k}(\vp)|^{2}+|e_{i}\overline{e}_{k}(\vp)|^{2})
-\frac{C}{2D}\sum_{i}\tilde{g}^{i\overline{i}}\nonumber\\
&& -\frac{h_{D}'}{\ve}(3\sup_{M}|\partial\vp|_{g}^{2}+\|F\|_{C^{1}}^{2}).
\end{eqnarray}
Now we choose $D=3\displaystyle\sup_{M}|\partial\vp|_{g}^{2}+\|F\|_{C^{1}}^{2}$. Then by (\ref{Properties of h D}),
\begin{equation}\label{Analogue of Lemma 5.4 equation 6}
\frac{1}{2\ve}-\frac{h_{D}'}{\ve}(3\sup_{M}|\partial\vp|_{g}^{2}+\|F\|_{C^{1}}^{2})\geq 0.
\end{equation}
For the fifth term of (\ref{Analogue of Lemma 5.4 equation 1}), we have
\begin{equation}\label{Analogue of Lemma 5.4 equation 7}
-Ae^{-A\vp}L(\vp)=Ae^{-A\vp}\sum_{i}\tilde{g}^{i\overline{i}}-Ane^{-A\vp}.
\end{equation}
Therefore, combining $L(\hat{Q})(p)\leq 0$, (\ref{Analogue of Lemma 5.4 equation 1}), (\ref{Analogue of Lemma 5.4 equation 2}), (\ref{Analogue of Lemma 5.4 equation 3}), (\ref{Analogue of Lemma 5.4 equation 4}), (\ref{Analogue of Lemma 5.4 equation 5}), (\ref{Analogue of Lemma 5.4 equation 6}) and (\ref{Analogue of Lemma 5.4 equation 7}), we complete the proof.
\end{proof}

Lemma \ref{Analogue of Lemma 5.4} is just the analogue of \cite[Lemma 5.4]{ChToWe16a}.
Finally, by Proposition \ref{Zero order estimate}, \ref{Gradient estimate}, Lemma \ref{Analogue of Lemma 5.4} and the similar argument of \cite[Proposition 5.1]{ChToWe16a}, we obtain the uniform upper bound of $\lambda_{1}$ at $p$, which completes the proof of Proposition \ref{Second order estimate}.  There are two problems that need to be explained.

One is that $h_{D}$ in this paper is different from $h$ in \cite{ChToWe16a} (the definition of $h$ corresponds to the definition of $h_{D}$ when $D=1$, i.e., $h=h_{1}$). However, the reader can verify, this minor difference does not influence the argument.

The other is that in the proof of \cite[Proposition 5.1]{ChToWe16a} the lower bound of $\frac{\tilde{\omega}^{n}}{\omega^{n}}$ is used, while in our case Proposition \ref{Zero order estimate} only guarantees the upper bound of $\displaystyle\sup_{M}\frac{\tilde{\omega}^{n}}{\omega^{n}}$.
However, we point out that $\inf_{M}\frac{\tilde{\omega}^{n}}{\omega^{n}}$ is not needed in the proof of \cite[Proposition 5.1]{ChToWe16a}. In fact, the proof of \cite[Proposition 5.1]{ChToWe16a} is split up into different cases.
In Case 1(a), we have (see \cite[p.23]{ChToWe16a})
\begin{equation*}
0\geq \sum_{k}\tilde{g}^{i\overline{i}}(|e_{i}e_{k}(\vp)|^{2}+|e_{i}\overline{e}_{k}(\vp)|^{2})-C\sum_{i}\tilde{g}^{i\overline{i}}.
\end{equation*}
By $\tilde{g}_{1\overline{1}}\geq\tilde{g}_{2\overline{2}}\geq\cdots\geq\tilde{g}_{n\overline{n}}$, we obtain
\begin{equation*}
0\geq \sum_{i,k}\tilde{g}^{1\overline{1}}(|e_{i}e_{k}(\vp)|^{2}+|e_{i}\overline{e}_{k}(\vp)|^{2})-Cn\tilde{g}^{n\overline{n}}.
\end{equation*}
Combining this and assumption $\tilde{g}_{1\overline{1}}\leq A^{3}e^{-2A\vp}\tilde{g}_{n\overline{n}}$, it is clear that
\begin{equation*}
0\geq \sum_{i,k}(|e_{i}e_{k}(\vp)|^{2}+|e_{i}\overline{e}_{k}(\vp)|^{2})-C,
\end{equation*}
which is enough for the following proof.
In Case 1(b) and Case 2, we have $\displaystyle\sum_{i}\tilde{g}^{i\overline{i}}\leq C$, where $C$ is independent of $\displaystyle\inf_{M}\frac{\tilde{\omega}^{n}}{\omega^{n}}$ (see \cite[p.12 and p.33]{ChToWe16a}). Combining this and $\displaystyle\prod_{i}\tilde{g}_{i\overline{i}}=\frac{\tilde{\omega}^{n}}{\omega^{n}}$, for each $i$, we obtain
\begin{equation*}
\tilde{g}_{i\overline{i}}\leq C\sup_{M}\frac{\tilde{\omega}^{n}}{\omega^{n}},
\end{equation*}
which implies $\tilde{g}^{i\overline{i}}\geq C^{-1}$ (independent of $\displaystyle\inf_{M}\frac{\tilde{\omega}^{n}}{\omega^{n}}$). And this is also enough for the following proof.
Therefore, the estimate in \cite[Proposition 5.1]{ChToWe16a} is independent of $\displaystyle\inf_{M}\frac{\tilde{\omega}^{n}}{\omega^{n}}$.
\end{proof}

\section{Regularity of envelope}

In this section, we prove the $C^{1,1}$-regularity of envelopes.
First, we recall the regularization theorem of plurisubharmonic functions on Hermitian manifolds.

\begin{theorem}\label{Regularization Theorem1} \cite{BK, Dem92, Dem94, KN}
Let $(M,\omega)$ be a compact Hermitian manifold. For any $\vp\in PSH(M,\omega)$, there exists a sequence $\vp_{i}\in PSH(M,\omega)\cap C^{\infty}(M)$ such that $\vp_{i}$ converges decreasingly to $\vp$.
\end{theorem}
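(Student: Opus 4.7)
The plan is to construct $\vp_i$ by local convolution, patched together with a regularized maximum, after absorbing the non-K\"ahler error with a small correction term. First I would fix a finite cover of $M$ by coordinate balls $(U_\alpha, z^\alpha)$ chosen so small that $\omega \leq C\,\omega_{\mathrm{Eucl}}$ on each, together with a slightly shrunken relatively compact cover and subordinate cutoff data. In each chart I would mollify: set $\vp^\alpha_\varepsilon := \vp * \rho_\varepsilon$ for a radial mollifier $\rho_\varepsilon$ in the $z^\alpha$ coordinates. Each $\vp^\alpha_\varepsilon$ is smooth, Euclidean-plurisubharmonic, and by the sub-mean-value property for plurisubharmonic functions decreases pointwise to $\vp$ as $\varepsilon \downarrow 0$.

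The second step is to upgrade local Euclidean plurisubharmonicity to global $\omega$-plurisubharmonicity. Because $\omega$ is not necessarily $\ddbar$-closed, comparing $\ddbar \vp^\alpha_\varepsilon$ with $-\omega$ in the chart produces a torsion defect of size $O(\varepsilon) + o(1)$ as the support of $\rho_\varepsilon$ shrinks. Following the Blocki--Kolodziej / Kolodziej--Nguyen refinement of Demailly's construction, I would kill this error by subtracting a term $A\varepsilon$ (or equivalently adding a strictly $\omega$-plurisubharmonic $C^2$ bump $\chi_\alpha$ that dominates the torsion defect on $U_\alpha$). I would then glue the local models using Demailly's regularized maximum $M_\eta$, forming
\[
\vp_\varepsilon := M_\eta\bigl( \vp^\alpha_\varepsilon + a_\alpha(\varepsilon) \bigr)_\alpha,
\]
with weights $a_\alpha(\varepsilon)$ chosen so that on each overlap a single chart dominates and the output is locally identified with a smooth $\omega$-psh model. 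Since $M_\eta$ preserves $\omega$-plurisubharmonicity and smoothness, $\vp_\varepsilon \in PSH(M,\omega) \cap C^\infty(M)$.

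Finally, to upgrade pointwise convergence to \emph{monotone decreasing} convergence, I would select a sufficiently rapidly decaying sequence $\varepsilon_i \downarrow 0$ and constants $\delta_i \downarrow 0$, and diagonally extract a subsequence by passing if necessary to $\max(\vp_{\varepsilon_i}, \vp_{\varepsilon_{i+1}} + \delta_i)$ regularized again, ensuring $\vp_i \searrow \vp$ while staying smooth and $\omega$-psh. The main obstacle is precisely this monotonicity step in the Hermitian case: the raw local convolutions do decrease in $\varepsilon$, but both the torsion correction $A\varepsilon$ and the regularized-max gluing can in principle destroy the pointwise monotonicity, so one must carefully coordinate the decay rates of $\varepsilon_i$, $\eta_i$, and the weights $a_\alpha(\varepsilon_i)$, along with uniform control of the torsion defect across all scales, to guarantee $\vp_{i+1} \leq \vp_i$. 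This delicate bookkeeping is the technical core of the BK/KN extension of Demailly's K\"ahler regularization to arbitrary compact Hermitian manifolds.
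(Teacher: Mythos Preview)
The paper does not prove this statement at all: Theorem~\ref{Regularization Theorem1} is simply quoted from the literature \cite{BK, Dem92, Dem94, KN} and used as a black box in the proof of Lemma~\ref{C 1,1 regularity of envelope, smooth case}. So there is no ``paper's own proof'' to compare against; your proposal is really an attempt to sketch the argument of those references, chiefly the B{\l}ocki--Ko{\l}odziej / Ko{\l}odziej--Nguyen version of Richberg--Demailly regularization.

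As such a sketch it is broadly on the right track, but there is one concrete slip and one genuinely underspecified step. The slip: your assertion that $\vp^\alpha_\varepsilon = \vp * \rho_\varepsilon$ is \emph{Euclidean} plurisubharmonic and decreases in $\varepsilon$ by the sub-mean-value property is not correct as stated, because $\vp$ is only $\omega$-psh, not psh. One must first add a local $C^\infty$ potential $\psi_\alpha$ with $\ddbar\psi_\alpha=\omega$ on $U_\alpha$, mollify the genuinely psh function $\vp+\psi_\alpha$ (which \emph{is} monotone in $\varepsilon$), and then subtract $\psi_\alpha$; the error $\psi_\alpha*\rho_\varepsilon-\psi_\alpha$ is what produces the $O(\varepsilon)$ defect you later absorb. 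The underspecified step is exactly the one you flag yourself: coordinating $\varepsilon_i$, $\eta_i$, and the additive weights so that the glued family is globally decreasing. Your suggestion of passing to $\max(\vp_{\varepsilon_i},\vp_{\varepsilon_{i+1}}+\delta_i)$ and re-regularizing does not obviously terminate in a decreasing sequence; in the actual BK/KN arguments this is handled by a careful one-shot choice of constants (using compactness and Dini-type arguments on the local monotone convergence), not by an ad hoc diagonal repair. If you intend this as a self-contained proof rather than a pointer to \cite{BK,KN}, that bookkeeping needs to be written out.
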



Next lemma can be regarded as a special case of Theorem \ref{C 1,1 regularity of envelope}.

\begin{lemma}\label{C 1,1 regularity of envelope, smooth case}
Let $(M,\omega)$ be a compact Hermitian manifold. For any $f\in C^{\infty}(M)$, we have $\vp_{f}\in C^{1,1}(M)$ and
\begin{equation*}
\|\vp_{f}\|_{C^{1,1}}\leq C,
\end{equation*}
where $C$ is a constant depending only on $\|f\|_{C^{2}}$ and $(M,\omega)$.
\end{lemma}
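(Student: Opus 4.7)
The plan is to realize $\vp_f$ as the $C^{1,\alpha}$ limit, as $\ve \to 0^+$, of the smooth solutions $\vp_\ve$ to the Monge--Amp\`ere equation \eqref{CMAE} with $F = f$, so that the uniform estimates of Section~2 transfer directly to the envelope. For each $\ve \in (0,1)$ the solvability results of \cite{C, GuLi} give a smooth solution $\vp_\ve$, and Propositions~\ref{Zero order estimate}, \ref{Gradient estimate}, \ref{Second order estimate} yield $\|\vp_\ve\|_{C^{2}(M)} \leq C$ with $C$ depending only on $\|f\|_{C^2}$ and $(M,\omega)$. Arzel\`a--Ascoli extracts a subsequence $\vp_{\ve_k}$ converging in $C^{1,\alpha}(M)$ (every $\alpha \in (0,1)$) to some $\psi \in PSH(M,\omega) \cap C^{1,1}(M)$ with the same bound. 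Proposition~\ref{Zero order estimate} also gives $\vp_\ve \leq f + C_0\ve$, hence $\psi \leq f$, and therefore $\psi \leq \vp_f$ by \eqref{Definition of envelope}.

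The harder direction is the reverse inequality $\vp_f \leq \psi$, i.e.\ that $u \leq \psi$ for every $u \in PSH(M,\omega)$ with $u \leq f$. I would first treat the case of smooth $u$. For such $u$ and for $\eta \in (0,1)$, introduce the perturbation
\[
u_\eta := (1-\eta)\,u + \eta\,(\inf_M u - 1),
\]
which satisfies $u_\eta \leq u \leq f$ and $\omega + \ddbar u_\eta \geq \eta\,\omega$, so $(\omega + \ddbar u_\eta)^n \geq \eta^n\,\omega^n$. At a maximum point $p \in M$ of $u_\eta - \vp_\ve$ one has $\ddbar u_\eta(p) \leq \ddbar \vp_\ve(p)$, hence
\[
\eta^n\,\omega^n(p) \leq (\omega+\ddbar u_\eta)^n(p) \leq (\omega+\ddbar \vp_\ve)^n(p) = e^{(\vp_\ve(p)-f(p))/\ve}\,\omega^n(p),
\]
and combining with $u_\eta(p) \leq f(p)$ forces $u_\eta - \vp_\ve \leq n\,\ve\,|\log\eta|$ on all of $M$. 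Sending $\ve = \ve_k \to 0$ and then $\eta \to 0$ yields $u \leq \psi$. For a general $u \in PSH(M,\omega)$ with $u \leq f$, apply Theorem~\ref{Regularization Theorem1} to obtain smooth $\omega$-psh $u_k \searrow u$; the smoothness of $f$ together with the upper-semicontinuity of $u$ give $\sup_M (u_k - f)_+ \to 0$ by an elementary compactness argument, so after a vanishing shift each $u_k$ may be assumed $\leq f$, and the smooth-case conclusion passes to the limit $k \to \infty$.

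Combining the two inequalities, $\psi = \vp_f$, so every subsequential limit of $\{\vp_\ve\}$ coincides with $\vp_f$; hence the full family converges to $\vp_f$ in $C^{1,\alpha}(M)$ and $\vp_f$ inherits the $C^{1,1}$ bound. The main obstacle is the lower bound $\vp_f \leq \psi$: a direct maximum-principle comparison between $\vp_\ve$ and an arbitrary $\omega$-psh competitor $u$ is useless because the Monge--Amp\`ere mass $(\omega+\ddbar u)^n$ can vanish identically, yielding no control over the right-hand side of \eqref{CMAE}. The perturbation $u_\eta$ is designed precisely to furnish a quantitative lower bound on this mass, at the cost of an error $n\,\ve\,|\log\eta|$ that vanishes once $\ve$ is sent to zero before $\eta$.
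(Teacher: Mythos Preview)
Your proposal is correct and follows essentially the same route as the paper: both perturb a smooth competitor $u$ to force a uniform lower bound on its Monge--Amp\`ere mass, apply the maximum principle against $\vp_\ve$, and invoke Theorem~\ref{Regularization Theorem1} to pass from smooth to general competitors. The only organizational differences are that the paper couples the perturbation parameter to $\ve$ (taking $u_\ve=(1-\ve)u+\ve(\log\ve^n+\min_M f)$, so the comparison $u_\ve\le\vp_\ve$ holds directly without a two-step limit) and thereby obtains the two-sided estimate $\vp_f+\ve(\log\ve^n-2\|f\|_{L^\infty})\le\vp_\ve\le\vp_f+C_0\ve$ for the \emph{full} family at once, bypassing your subsequence/uniqueness-of-limit argument; your decoupled $(\eta,\ve)$ version and Dini-type step for $(u_k-f)_+\to 0$ are a valid alternative that reaches the same conclusion.
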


\begin{proof}
We consider the complex Monge-Amp\`{e}re equation
\begin{equation*}
(\omega+\ddbar\vp)^{n}=e^{\frac{1}{\ve}(\vp-f)}\omega^{n}.
\end{equation*}
We use $\vp_{\ve}$ to denote its unique solution, i.e.,
\begin{equation}\label{C 1,1 regularity of envelope, smooth case equation 1}
(\omega+\ddbar\vp_{\ve})^{n}=e^{\frac{1}{\ve}(\vp_{\ve}-f)}\omega^{n}.
\end{equation}
Next, for any $u\in PSH(M,\omega)\cap C^{\infty}(M)$ such that $u\leq f$, we define
\begin{equation*}
u_{\ve}=(1-\ve)u+\ve(\log\ve^{n}+\min_{M}f).
\end{equation*}
By direct calculation, we have
\begin{equation}\label{C 1,1 regularity of envelope, smooth case equation 2}
(\omega+\ddbar u_{\ve})^{n}\geq \ve^{n}\omega^{n}\geq e^{\frac{1}{\ve}(u_{\ve}-f)}\omega^{n}.
\end{equation}
Combining (\ref{C 1,1 regularity of envelope, smooth case equation 1}), (\ref{C 1,1 regularity of envelope, smooth case equation 2}) and maximum principle, we obtain $u_{\ve}\leq\vp_{\ve}$, which implies
\begin{equation*}
(1-\ve)u+\ve(\log\ve^{n}-\|f\|_{L^{\infty}})\leq\vp_{\ve}.
\end{equation*}
Theorem \ref{Regularization Theorem1} implies
\begin{equation*}
\vp_{f}(x) = \sup \{\vp(x)~|~\vp\in PSH(M,\omega)\cap C^{\infty}(M) \text{~and~} \vp\leq f\}.
\end{equation*}
Since $u$ is arbitrary, by $u\leq f$, it is clear that
\begin{equation*}
(1-\ve)\vp_{f}+\ve(\log\ve^{n}-\|f\|_{L^{\infty}})\leq\vp_{\ve},
\end{equation*}
which implies
\begin{equation}\label{C 1,1 regularity of envelope, smooth case equation 3}
\vp_{f}+\ve(\log\ve^{n}-2\|f\|_{L^{\infty}})\leq\vp_{\ve},
\end{equation}
where we used $\vp_{f}\leq f$. By Proposition \ref{Zero order estimate} and the definition of $\vp_{\ve}$, we obtain
\begin{equation}\label{C 1,1 regularity of envelope, smooth case equation 6}
\vp_{\ve}-C_{0}\ve\leq f \text{~and~} \vp_{\ve}-C_{0}\ve\in PSH(M,\omega).
\end{equation}
Combining (\ref{C 1,1 regularity of envelope, smooth case equation 3}) and (\ref{C 1,1 regularity of envelope, smooth case equation 6}), it is clear that
\begin{equation}\label{C 1,1 regularity of envelope, smooth case equation 4}
\lim_{\ve\rightarrow 0} \|\vp_{\ve}-\vp_{f}\|_{L^{\infty}}=0.
\end{equation}
On the other hand, by Proposition \ref{Zero order estimate},  \ref{Gradient estimate} and \ref{Second order estimate}, we have
\begin{equation}\label{C 1,1 regularity of envelope, smooth case equation 5}
\|\vp_{\ve}\|_{C^{2}}\leq C.
\end{equation}
Combining (\ref{C 1,1 regularity of envelope, smooth case equation 4}) and (\ref{C 1,1 regularity of envelope, smooth case equation 5}), we have $\vp_\ve$ converges in $C^{1,1}$ to $\varphi_f$.
\end{proof}

Now we are in a position to prove Theorem \ref{C 1,1 regularity of envelope}.

\begin{proof}[Proof of Theorem \ref{C 1,1 regularity of envelope}]
Since $f\in C^{1,1}(M)$, by smooth approximation, there exists a sequence of smooth function $f_{i}$ on $M$ such that
\begin{equation}\label{C 1,1 regularity of envelope equation 1}
\lim_{i\rightarrow\infty}\|f_{i}-f\|_{L^{\infty}}=0 \text{~and~} \|f_{i}\|_{C^{2}}\leq C_{0},
\end{equation}
where $C_{0}$ is a constant depending only on $\|f\|_{C^{1,1}}$ and $(M,\omega)$. On the other hand, for any $u\in PSH(M,\omega)$ with $u\leq f$, we have
\begin{equation*}
u-\|f_{i}-f\|_{L^{\infty}}\leq f_{i}.
\end{equation*}
By the definition of $\vp_{f_{i}}$, it is clear that
\begin{equation*}
u-\|f_{i}-f\|_{L^{\infty}}\leq \vp_{f_{i}}.
\end{equation*}
Since $u$ is arbitrary, we obtain
\begin{equation*}
\vp_{f}-\|f_{i}-f\|_{L^{\infty}}\leq \vp_{f_{i}}.
\end{equation*}
Similarly, we get
\begin{equation*}
\vp_{f_{i}}-\|f-f_{i}\|_{L^{\infty}}\leq \vp_{f}.
\end{equation*}
It then follows that
\begin{equation}\label{C 1,1 regularity of envelope equation 2}
\|\vp_{f_{i}}-\vp_{f}\|_{L^{\infty}} \leq \|f_{i}-f\|_{L^{\infty}}.
\end{equation}
Combining (\ref{C 1,1 regularity of envelope equation 1}) and (\ref{C 1,1 regularity of envelope equation 2}), we complete the proof.
\end{proof}

\section{Examples}

In this section, we give some examples to explain the regularity result in Theorem \ref{C 1,1 regularity of envelope} is optimal.

\subsection{Example of complex dimension one}
In this subsection, we construct a smooth function $f$ on the complex projective space $(\mathbb{CP}^{1},\omega_{FS})$,  such that $\vp_{f}\notin C^{2}(\mathbb{CP}^{1})$, where $\omega_{FS}$ is the Fubini-Study metric.

First, we define a function $h(t)$  on $[0,2]$ by
\begin{equation*}
h(t)=\begin{cases}
\left(\frac{1}{\sqrt{3}-1}-1\right)^{2}+\log(\sqrt{3}-1), & t\in [0,\sqrt{3}-1],\\[6pt]
\left((\frac{1}{t}-1)_{+}\right)^{2}+\log t, & t\in [\sqrt{3}-1,2],
\end{cases}
\end{equation*}
where $(\frac{1}{t}-1)_{+}=\max\{\frac{1}{t}-1,0\}$. It is clear that $h$ is a convex function on $[0,1]$. Let $\tilde{h}$ be a smooth function on $[0,2]$ such that
\begin{equation*}
\text{$\tilde{h}(t)=\left(\frac{1}{t}-1\right)^{2}+\log t$ in $[\frac{4}{5},2]$ and $\tilde{h}(t)\geq h(t)$ in $[0,2]$}.
\end{equation*}
Denote by $[z_0, z_1]$ the homogeneous coordinates on $\mathbb{CP}^{1}$.
Let
\begin{equation*}
U=\{[1,z_{1}]~|~|z_{1}|^{2}\leq\frac{5}{4}\} \text{~and~} V=\{[z_{0},1]~|~|z_{0}|^{2}\leq2\}
\end{equation*} be two subsets of $\mathbb{CP}^{1}$ such that $\mathbb{CP}^{1}=U \cup V$.
We define a function $f$ on $\mathbb{CP}^{1}$ by
\begin{equation*}
f=\left\{ \begin{array}{ll}
\ \text{$(|z_{1}|^{2}-1)^{2}-\log(1+|z_{1}|^{2})$ in $U$,} \\[6pt]
\ \text{$\tilde{h}(|z_{0}|^{2})-\log(1+|z_{0}|^{2})$ in $V$.}
\end{array}\right.
\end{equation*}
Since $\tilde{h}\in C^{\infty}([0,2])$, we obtain that $f\in C^{\infty}(\mathbb{CP}^{1})$.
Then we prove
\begin{proposition}\label{counter-example}
$\vp_{f}\in C^{1,1}(\mathbb{CP}^{1})\setminus C^{2}(\mathbb{CP}^{1})$.
\end{proposition}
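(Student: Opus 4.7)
My plan is to identify $\vp_f$ explicitly on both sides of the circle $\{|z_1|=1\}$ and then exhibit a jump of its second derivative across it. Since $f$ is invariant under $z_1\mapsto e^{i\theta}z_1$, which is an isometry of $\omega_{FS}$, uniqueness of the envelope forces $\vp_f$ to inherit this symmetry, so I will write $\vp_f=\phi(|z_1|^2)$ with $t=|z_1|^2$.

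First I would show $\phi(t)=-\log(1+t)$ on $[0,1]$. The barrier $\eta:=-\log(1+|z_1|^2)$ lies in $PSH(\mathbb{CP}^1,\omega_{FS})$ (it has a log pole at the south pole, but $\omega_{FS}+\ddbar\eta$ is the Dirac current there and hence $\ge 0$). One checks $\eta\le f$ globally: on $U$ this reads $0\le(|z_1|^2-1)^2$, while on $V$, using $|z_1|^2=1/|z_0|^2$, it reduces to $\log s\le \tilde h(s)$, which follows from $\tilde h\ge h$ together with a piecewise verification that $h(s)\ge \log s$ on $[0,2]$ (on $[0,\sqrt 3-1]$ one has $h=c_0>\log(\sqrt 3-1)\ge\log s$, and on $[\sqrt 3-1,1]$ the non-negative term $(1/s-1)^2$ suffices). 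Thus $\vp_f\ge\eta$. For the reverse bound, $\vp_f+\log(1+|z_1|^2)$ is psh on the $U$-chart, and on the circle $|z_1|=1$ it satisfies $\vp_f+\log 2\le f+\log 2=(|z_1|^2-1)^2=0$; the classical maximum principle on the closed disk $\{|z_1|^2\le 1\}$ then gives $\vp_f\le\eta$ there, yielding the claimed equality and in particular $\phi'(1^-)=-1/2$.

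Next I would establish $\vp_f=f$ on some interval $[1,1+\delta)$ by ruling out any free component $(a,b)\subset(1,\infty)$ on which $\vp_f<f$. On such a component $\omega_{FS}+\ddbar\vp_f=0$, and radial symmetry forces the ansatz $\vp_f=c\log t+b_0-\log(1+t)$. If $b<\infty$, the $C^{1,1}$-regularity from Theorem~\ref{C 1,1 regularity of envelope} provides $C^1$-tangential matching against $f$ at both endpoints, which (comparing derivatives) yields $c=2a(a-1)=2b(b-1)$, hence $a+b=1$, impossible for $a\ge 1$. If $b=\infty$, smoothness of $\vp_f$ at the south pole pins $c=1$ in the ansatz (so that the apparent $\log t$ cancels $\log(1+t)$ upon switching to the $V$-chart), and the tangential matching at $a$, worked out in the $V$-chart, reads $\tilde h'(1/a)=0$; the only candidate on $(0,4/5]$ is a zero of $\tilde h'$ near the critical point $s=\sqrt 3-1$ of $h$, producing $a\approx(1+\sqrt 3)/2>1$, strictly separated from $1$. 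Either way no free interval in $(1,\infty)$ can touch $t=1$ from the right, so $\phi(t)=(t-1)^2-\log(1+t)$ on some $[1,1+\delta)$.

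With the two formulas in hand the $C^2$-failure is a one-line computation: $\phi''(1^-)=\tfrac{1}{(1+t)^2}\big|_{t=1}=\tfrac14$ while $\phi''(1^+)=2+\tfrac{1}{(1+t)^2}\big|_{t=1}=\tfrac94$, a jump of $2$. Since $\partial_{\bar z_1}\partial_{z_1}\vp_f=\phi'(t)+t\phi''(t)$ in the radial case, this discontinuity transfers to a jump of $2$ in $\partial_{\bar z_1}\partial_{z_1}\vp_f$ across $\{|z_1|=1\}$, so $\vp_f\notin C^2(\mathbb{CP}^1)$, and the $C^{1,1}$-regularity is immediate from Theorem~\ref{C 1,1 regularity of envelope}. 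The step I expect to be the main obstacle is the exclusion of a free component extending to the south pole in Step 2, because it requires using the correct asymptotic (the $c=1$ ansatz rather than the $c=0$ ansatz used at the north pole, reflecting that the local K\"ahler potential on $V$ differs from that on $U$ by $\log t$) and then verifying, via the specific form of $\tilde h$, that the resulting left endpoint $a$ is bounded strictly away from $1$; the other ingredients are routine barrier and maximum-principle arguments.
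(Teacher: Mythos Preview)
Your Step~1 is fine and essentially coincides with the paper's argument on the closed disk $U_0=\{|z_1|^2\le 1\}$: the function $-\log(1+|z_1|^2)$ is an admissible lower barrier, and the maximum principle gives the matching upper bound.

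The real divergence is Step~2, and here there is a genuine gap. Your dichotomy ``$b<\infty$'' versus ``$b=\infty$'' does not cover all cases. The derivative matching $c=2a(a-1)=2b(b-1)$ uses that $f(t)=(t-1)^2-\log(1+t)$ at \emph{both} endpoints, but this formula is only valid for $t\le 5/4$; for $t>5/4$ the obstacle is $\tilde h(1/t)+\log t-\log(1+t)$ with $\tilde h$ unspecified (only smooth and $\ge h$). So the case $1<a\le 5/4<b<\infty$ falls outside both of your subcases, and in that case the matching condition at $b$ involves $\tilde h'$, for which you have no explicit control. (Your $b=\infty$ case can in fact be made to work, since any zero of $\tilde h'$ on $(0,4/5)$ forces $a>5/4$, and on $[4/5,1)$ one checks $\tilde h'(s)=(s^2+2s-2)/s^3>0$; but the phrasing ``near the critical point $s=\sqrt3-1$ of $h$'' is not justified, as $\tilde h$ is arbitrary on $(0,4/5)$.) One can patch the missing subcase using the $C^0$ matching $\phi(b)=f(b)$ together with the strict inequality $\tilde h(s)>\log s$ for $s<1$, but you have not done so.

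The paper bypasses all of this with a one-line observation you are missing: the explicit function
\[
\vp=\big((|z_1|^2-1)_+\big)^2-\log(1+|z_1|^2)\quad\text{on }U,\qquad \vp=h(|z_0|^2)-\log(1+|z_0|^2)\quad\text{on }V,
\]
is globally $\omega_{FS}$-psh (by convexity of $t\mapsto((t-1)_+)^2$ and of $h$ on $[0,1]$) and satisfies $\vp\le f$ (since $\tilde h\ge h$). Hence $\vp\le\vp_f$ everywhere. But on the annulus $\{1<|z_1|^2\le 5/4\}$ one has $\vp=f$, so $\vp_f\ge\vp=f\ge\vp_f$ there, giving $\vp_f=f$ immediately. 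No free-boundary analysis is needed. With this in hand, your jump computation at $t=1$ is correct and finishes the proof.
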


\begin{proof}
Define
\begin{equation*}
\vp=\left\{ \begin{array}{ll}
\ \text{$\left((|z_{1}|^{2}-1)_{+}\right)^{2}-\log(1+|z_{1}|^{2})$ in $U$,} \\[6pt]
\ \text{$h(|z_{0}|^{2})-\log(1+|z_{0}|^{2})$ in $V$.}
\end{array}\right.
\end{equation*}
It is clear that $\vp\in C^{1,1}(U)\setminus C^{2}(U)$. Since
$\tilde{h}\geq h$, we have  $\vp\leq f$.

Next, we verify that $\vp\in PSH(\mathbb{CP}^{1},\omega_{FS})$. On $U$, we compute
\begin{eqnarray*}
&&\omega_{FS}+\ddbar\vp \\
& = &\ddbar \left[\log(1+|z_{1}|^{2})+\left((|z_{1}|^{2}-1)_{+}\right)^{2}-\log(1+|z_{1}|^{2})\right]\\
& \geq & 0.
\end{eqnarray*}
Similarly, on $V_{0}=\{[z_{0},1]~|~|z_{0}|^{2}\leq 1\}$, we have
\begin{eqnarray*}
&&\omega_{FS}+\ddbar\vp \\ & =& \ddbar \left[\log(1+|z_{1}|^{2})+h(|z_{0}|^{2})-\log(1+|z_{1}|^{2})\right]\\
& =& \ddbar \left(h(|z_{0}|^{2})\right)\\
& \geq & 0,
\end{eqnarray*}
where we used the fact that $h$ is a convex function on $[0,1]$. Since $\mathbb{CP}^{1}=U \cup V_{0}$, we obtain $\vp\in PSH(\mathbb{CP}^{1},\omega_{FS})$.

Now we show $\varphi_f$ is not $C^{2}$.
For convenience, we denote
$$U_0=\{[1,z_{1}]~|~|z_{1}|^{2}\leq 1\}.$$ Then for any $u\in PSH(\mathbb{CP}^{1},\omega_{FS})$ such that $u\leq f$, we have
\begin{equation*}
\omega_{FS}+\ddbar u=\ddbar\left[\log(1+|z_{1}|^{2})+u)\right]\geq 0 \text{~~in $U_0$}
\end{equation*}
and
\begin{equation*}
\log(1+|z_{1}|^{2})+u\leq \log(1+|z_{1}|^{2})+f=0 \text{~on $\partial U_0$}.
\end{equation*}
By maximum principle, it is clear that
\begin{equation}\label{One dimensional example equation 1}
u\leq -\log(1+|z_{1}|^{2})=\vp \text{~in $U_0$}.
\end{equation}
Since $\vp=f$ on $U\setminus U_0$ and $u\leq f$, we have
\begin{equation}\label{One dimensional example equation 2}
u\leq f=\vp \text{~in $U\setminus U_0$}.
\end{equation}
Combining (\ref{One dimensional example equation 1}) and (\ref{One dimensional example equation 2}), we have $u\leq \vp$ on $U$,
which implies $\vp_{f}=\vp$ on $U$.
The proposition is proved.
\end{proof}

\subsection{Examples of higher dimensions}
In this subsection, we give more examples on compact Hermitian manifolds of higher dimensions. First, we have the following lemma.

\begin{lemma}\label{Higher dimensional examples lemma}
Let $(M,\omega_{M})$ and $(N,\omega_{N})$ be Hermitian manifolds and let $\pi:M\times N\rightarrow M$ be the projection map. For any $f\in C^{1,1}(M)$, we have
\begin{equation*}
\pi^{*}\vp_{f}=\vp_{\pi^{*}f},
\end{equation*}
where $\pi^{*}$ is the pullback map.
\end{lemma}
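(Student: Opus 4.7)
The plan is to interpret the envelope $\varphi_{\pi^{*}f}$ on $M\times N$ relative to the product Hermitian form $\Omega=\pi^{*}\omega_{M}+p_{N}^{*}\omega_{N}$, where $p_{N}:M\times N\to N$ is the other projection, and then prove the two inequalities $\pi^{*}\varphi_{f}\le\varphi_{\pi^{*}f}$ and $\pi^{*}\varphi_{f}\ge\varphi_{\pi^{*}f}$ separately. The key observations I will lean on are entirely formal: the pullback of an $\omega_{M}$-psh function by $\pi$ is $\Omega$-psh, and conversely the restriction of an $\Omega$-psh function to any slice $M\times\{y\}$ is $\omega_{M}$-psh.

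For the inequality $\pi^{*}\varphi_{f}\le\varphi_{\pi^{*}f}$, I would take any $\psi\in PSH(M,\omega_{M})$ with $\psi\le f$, and observe that $\pi^{*}\psi\in PSH(M\times N,\Omega)$, because
\[
\Omega+\ddbar(\pi^{*}\psi)=\pi^{*}(\omega_{M}+\ddbar\psi)+p_{N}^{*}\omega_{N}\ge0.
\]
Since also $\pi^{*}\psi\le\pi^{*}f$, the definition of the envelope yields $\pi^{*}\psi\le\varphi_{\pi^{*}f}$. Taking the supremum over all such $\psi$ and using the definition of $\varphi_{f}$ gives the desired bound $\pi^{*}\varphi_{f}\le\varphi_{\pi^{*}f}$.

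For the reverse inequality I would argue slicewise. Fix $y\in N$ and set $\psi_{y}(x)=\varphi_{\pi^{*}f}(x,y)$. Since $\varphi_{\pi^{*}f}\in PSH(M\times N,\Omega)$, restricting to the complex submanifold $M\times\{y\}$ kills the $p_{N}^{*}\omega_{N}$ contribution (its pullback to the slice vanishes) and shows $\omega_{M}+\ddbar\psi_{y}\ge 0$, so $\psi_{y}\in PSH(M,\omega_{M})$. Moreover $\psi_{y}(x)\le\pi^{*}f(x,y)=f(x)$, so by the definition of $\varphi_{f}$ we have $\psi_{y}(x)\le\varphi_{f}(x)$ for every $x$. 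Since $y$ was arbitrary, this is exactly $\varphi_{\pi^{*}f}\le\pi^{*}\varphi_{f}$.

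The main thing to be careful about is the upper semicontinuity underlying the definition of the envelope (so that $\varphi_{\pi^{*}f}$ is a genuine element of $PSH(M\times N,\Omega)$ rather than only its usc regularization), but this is already absorbed into the statement ``$\varphi_{f}\in PSH(M,\omega)$'' recorded after \eqref{Definition of envelope} and extends verbatim to the product. No estimate is needed here; the lemma is purely a formal consequence of the definition together with the stability of the psh condition under pullback and under restriction to complex submanifolds. The point of the lemma in the sequel is that combining it with the one-dimensional construction in Proposition \ref{counter-example} immediately produces higher-dimensional examples on any $\mathbb{CP}^{1}\times N$ showing that the regularity $C^{1,1}$ in Theorem \ref{C 1,1 regularity of envelope} cannot be improved.
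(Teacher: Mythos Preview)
Your proposal is correct and follows essentially the same approach as the paper: both prove the two inequalities via the formal stability of the psh condition under pullback and under restriction to slices $M\times\{y\}$. The only cosmetic difference is that for $\pi^{*}\varphi_{f}\le\varphi_{\pi^{*}f}$ the paper uses $\varphi_{f}$ itself as the competitor (rather than a generic $\psi$), and for the reverse inequality it slices a generic competitor $u$ rather than $\varphi_{\pi^{*}f}$ directly, thereby sidestepping the usc regularization remark you flag; neither change affects the substance.
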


\begin{proof}
First, since $\vp_{f}\in PSH(M,\omega_{M})$ and $\vp_{f}\leq f$, we obtain $\pi^{*}\vp_{f}\in PSH(M\times N,\omega_{M}+\omega_{N})$ and $\pi^{*}\vp_{f}\leq\pi^{*}f$, which implies
\begin{equation}\label{Higher dimensional examples lemma equation 1}
\pi^{*}\vp_{f}\leq\vp_{\pi^{*}f}.
\end{equation}
Next, for any $(p.q)\in M\times N$ and $u\in PSH(M\times N,\omega_{M}+\omega_{N})$ with $u\leq\pi^{*}f$, we have $u(\cdot,q)\in PSH(M,\omega_{M})$ and $u(\cdot,q)\leq f$ on $M$. It then follows that $u(p,q)\leq\vp_{f}(p)$, i.e.,
\begin{equation*}
u(p,q)\leq\pi^{*}\vp_{f}(p,q).
\end{equation*}
Since $(p,q)$ and $u$ are arbitrary, by the definition of $\vp_{\pi^{*}f}$, we have
\begin{equation}\label{Higher dimensional examples lemma equation 2}
\vp_{\pi^{*}f}\leq\pi^{*}\vp_{f}.
\end{equation}
Combining (\ref{Higher dimensional examples lemma equation 1}) and (\ref{Higher dimensional examples lemma equation 2}), we complete the proof.
\end{proof}

Now, let $(M,\omega)$ be a compact Hermitian manifold and let $\pi:\mathbb{CP}^{1}\times M\rightarrow \mathbb{CP}^{1}$ be the projection map. Then $\pi^{*}f$ is a smooth function on $\mathbb{CP}^{1}\times M$, where $f$ is defined in subsection 4.1. However, by Proposition \ref{counter-example} and Lemma \ref{Higher dimensional examples lemma}, $\vp_{\pi^{*}f}=\pi^{*}\vp_{f}$ is not $C^{2}$.

\end{document}